\documentclass[preprint,12pt]{elsarticle}
\usepackage{tikz}
\usepackage{amsthm,amsfonts,amssymb,amscd,amsmath,enumerate,verbatim,calc,graphicx,geometry}
\usepackage[all]{xy}
\newtheorem{theorem}{Theorem}[section]

\theoremstyle{definition}
\theoremstyle{definitions}
\newtheorem{definition}[theorem]{Definition}

\newtheorem{remark}[theorem]{Remark}

\theoremstyle{notations}

\theoremstyle{remarks}

\journal{}

\begin{document}

\begin{frontmatter}



\title{Covering Maps with respect to Topologies on the Fundamental Group }


\author[]{Naghme Shahami }
\ead{na.shahami@mail.um.ac.ir }
\author[]{Behrooz Mashayekhy\corref{cor1}}
\ead{bmashf@um.ac.ir }

\address{Department of Pure Mathematics, Center of Excellence in Analysis on Algebraic Structures, Ferdowsi University of Mashhad,\\
P.O.Box 1159-91775, Mashhad, Iran.}
\cortext[cor1]{Corresponding author}

\begin{abstract}
In this paper, using the classical covering theory, we introduce a generalization of covering maps of a space $X$ with respect to a topology $\tau$ 
on the fundamental 
group of $X$. We show that the famous notions, covering, semicovering, generalized covering and fibration maps are special cases of this new 
notion $\pi_1^{\tau}$-covering map.  Moreover, among presenting some properties for this new notion, 
we compare $\pi_1^{\tau}$-covering maps of a space $X$ for 
several famous topologies on the fundamental group of $X$.
\end{abstract}

\begin{keyword}
Covering map\sep  semicovering\sep  generalized covering\sep  fundamental group\sep  compact-open topology\sep  
Spanier topology\sep  whisker topology\sep  lasso topology\sep  subgroup topology.

\MSC[2020]{57M10, 57M12, 57M05, 55Q05.}
\end{keyword}

\end{frontmatter}




\section{Introduction and Motivation}

We recall that a continuous map $p:\widetilde{X}\rightarrow X$ is a covering map if $\widetilde{X}$ is a path connected and every point of $X$ 
has an open neighborhood 
$U$ which is evenly covered by $p$, that is, $p^{-1}(U)$ is a disjoint union of open subsets of $\widetilde{X}$ such that each of which 
is mapped homeomorphically onto 
$U$ by $p$. In classical covering theory, the existence of a covering map $p:(\widetilde{X},\widetilde{x}_0)\rightarrow (X,x_0)$ for which the equality 
$G=\pi_1(\widetilde{X},\widetilde{x}_0)$ holds, where $G$ is a subgroup of $\pi_1(X,x_0)$,  is an essential question. 
Spanier \cite[p. 82]{Span} gave an answer 
to the above question as follows (see also Rotman \cite[p. 295]{Rot}). 

Let $G$ be a subgroup of $\pi_1(X,x_0)$ and let $P(X,x_0)$ be the family of all paths $f$ in $X$ with $f(0)=x_0$. 
Define $f\sim_G g$ by $f(1)=g(1)$ and 
$[f\ast g^{-1}]\in G$. It is easy to that $\sim_G$ is an equivalent relation on $P(X,x_0)$. Denote the equivalence class of $f\in P(X, x_0)$ by 
$\langle f\rangle_G$ 
and define $\widetilde{X}_G$ as the set of all such equivalence classes. Define $\widetilde{x}_0=\langle c_{x_0}\rangle_G$, 
where $c_{x_0}$ is the constant path at $x_0$, 
and a function $p_G:(\widetilde{X}_G,\widetilde{x}_0)\rightarrow (X,x_0)$ by $\langle f\rangle_G\mapsto f(1)$, 
which is called the endpoint projection. 
By considering the whisker topology on $\widetilde{X}_G$ 
which is generated by the sets $(U,\langle f\rangle_G)=\{ \langle f\ast\lambda \rangle_G\mid \lambda([0,1])\subseteq U\}$ as a basis, the function 
$p_G:(\widetilde{X}_G,\widetilde{x}_0)\rightarrow (X,x_0)$ is continuous and also is onto if $X$ is path connected 
(see \cite[Lemma 10.31]{Rot}). Also, 
every path $f$ in $X$ beginning at $x_0$ can be lifted to a path $\widetilde{f}$ in $\widetilde{X}_G$ beginning at $\widetilde{x}_0$ and ending 
at $\langle f\rangle_G$ 
defined by $\widetilde{f}(t)=\langle f_t\rangle_G$, where $f_t(s)=f(ts)$ (see \cite[Lemma 10.32]{Rot}). Note that $\widetilde{f}$ is called 
the standard lift of $f$ and 
hence the map $p_G:(\widetilde{X}_G,\widetilde{x}_0)\rightarrow (X,x_0)$ has the path lifting property. As a consequence, one can see easily that 
$\widetilde{X}_G$ is path connected. 
As a classical result, it is proved that if $X$ is connected, locally path connected, and semilocally simply connected, 
then $p_G:(\widetilde{X}_G,\widetilde{x}_0)\rightarrow (X,x_0)$ is a covering map and $\pi_1(p_G)(\pi_1(\widetilde{X}_G,\widetilde{x}_0))=G$ 
(see \cite[Theorem 10.34]{Rot}). 
More generally, it is proved that if $X$ is connected, locally path connected, and if there is an open covering $\mathcal{U}$ of $X$ such that 
$\pi(\mathcal{U},x_0)\subseteq G$, then $p_G:(\widetilde{X}_G,\widetilde{x}_0)\rightarrow (X,x_0)$ is a covering map and 
$\pi_1(p_G)(\pi_1(\widetilde{X}_G,\widetilde{x}_0))=G$, 
where $\pi(\mathcal{U},x_0)$, the Spanier subgroup of the open covering $\mathcal{U}$, is the normal subgroup of $\pi_1(X, x_0)$ generated 
by the homotopy class 
of lollipops $\alpha * \beta * \alpha^{-1}$, where $\beta$ is a loop lying in an element of $U \in \mathcal{U}$ at $\alpha(1)$, and $\alpha$ is 
any path originated 
at $x_0$ (see \cite[Section 2.5 Theorem 13]{Span}). Conversely, if $p:(\widetilde{X},\widetilde{x}_0)\rightarrow (X,x_0)$ is a covering map with 
$G=\pi_1(p)(\pi_1(\widetilde{X},\widetilde{x}_0))$, 
then there exists an open covering $\mathcal{U}$ of $X$ such that $\pi(\mathcal{U},x_0)\subseteq G$ (see \cite[Section 2.5 Theorem 12]{Span}). 
Note that the \emph{Spanier topology} \cite[p. 12]{W} on the fundamental group $\pi_1(X, x_0)$, denoted by $\pi_1^{Span}(X, x_0)$, 
was introduced using the collection of all Spanier subgroups $\pi(\mathcal{U}, x_0)$. By considering the Spanier topology  $\pi_1^{Span}(X, x_0)$ 
and the above two last classical results, for a connected and locally path connected space $X$, there exists a covering map 
$p:(\widetilde{X},\widetilde{x}_0)\rightarrow (X,x_0)$ 
with $G=\pi_1(p)(\pi_1(\widetilde{X},\widetilde{x}_0))$ if and only if $G$ is an open subgroup of $\pi_1^{Span}(X, x_0)$.

The above properties motivate us to introduce a generalization of covering maps of a space $X$ with respect to a topology $\tau$ on the 
fundamental group 
$\pi_1(X,x_0)$ as follows.

\begin{definition}
Let $\tau$ be a topology on the fundamental group $\pi_1(X,x_0)$, denoted by $\pi_1^{\tau}(X,x_0)$. Then we call a map 
$p: (\widetilde{X},\widetilde{x}_0) \to (X,x_0)$ 
 a ``$\pi_1^{\tau}$-covering map" if the following hold:
\begin{itemize}
\item $p$ is continuous.
\item $\widetilde{X}$ is path connected.
\item $p$ has path lifting property.
\item $\pi_1(p)(\pi_1(\widetilde{X},\widetilde{x}_0))$ is open in $\pi_1^{\tau}(X,x_0)$.
\end{itemize}
\end{definition}

\begin{remark}\label{1.2}
Note that using path lifting property, it is easy to see that a $\pi_1^{\tau}$-covering map is onto. Moreover, 
for any open subgroup $G$ of $\pi_1^{\tau}(X,x_0)$, we can show that the endpoint projection 
$p_G:(\widetilde{X}_G,\widetilde{x}_0)\rightarrow (X,x_0)$ 
is a $\pi_1^{\tau}$-covering map if $\widetilde{X}$ is path connected and $\pi_1^{\tau}(X,x_0)$ is a semitopological group. 
By the above argument $p_G$ has the path lifting property. If $[f]\in G$, then $f\sim_G c_{x_0}$ and so 
$\langle f\rangle_G=\langle c_{x_0}\rangle_G=\widetilde{x}_0$. Thus $\widetilde{f}(1)=\widetilde{f}(0)$ and hence 
$\pi_1(p_G)([\widetilde{f}])=[f]$, where 
$\widetilde{f}$ is the standard lift of $f$. Therefore we have $G\subseteq \pi_1(p_G)(\pi_1(\widetilde{X}_G,\widetilde{x}_0))$.  
Hence $p_G$ is a $\pi_1^{\tau}$-covering map. Also, note that Brazas in \cite[Lemma 5.9]{B15} proved that  
$p_G:(\widetilde{X}_G,\widetilde{x}_0) \to (X,x_0)$ has unique path lifting property if and only if 
$G= \pi_1(p_G)(\pi_1(\widetilde{X}_G,\widetilde{x}_0))$ if and only if $p_G$ has unique lifting property.
\end{remark}

Some people extended the notion of covering maps in various viewpoints, for instances, rigid covering maps \cite{Biss}, semicovering maps \cite{B12},
generalized covering maps \cite{B15,FZ7} and fibration \cite{Span}. 
These generalizations focus on keeping some properties of covering maps and eliminating the evenly covered property. 
Brazas \cite{B12} defined semicovering maps by 
removing the evenly covered property and keeping local homeomorphism and the unique path and homotopy lifting properties. 
For generalized covering maps, 
the local homeomorphism is replaced with the unique lifting property (see \cite{B15,FZ7}). 
We show that the famous notions, covering, semicovering, generalized covering and fibration maps are special cases of this new 
notion $\pi_1^{\tau}$-covering map for some suitable topologies ${\tau}$. 

In order to investigate the notion $\pi_1^{\tau}$-covering maps, we need to review some famous topologies on the 
fundamental group $\pi_1(X,x_0)$, 
which is done in Section 2. In Section 3, in order to dealing with the notion $\pi_1^{\tau}$-covering maps, we study and present some properties of 
$\pi_1^{\tau}$-covering maps, specially, we show that  the new  notion $\pi_1^{\tau}$-covering maps include some well known generalizations 
of classical covering maps 
such as semicovering, generalized covering and fibration maps.
Finally, in Section 4, we compare $\pi_1^{\tau}$-covering maps of a space $X$ for several famous topologies  on the fundamental group 
of $\pi_1(X,x_0)$.


\section{ Some Topologies on Fundamental Groups} \label{Sec2}

 Putting natural topologies on the fundamental group comes back to Hurewicz \cite{Hur} in 1935 and Dugundji \cite{D} in 1950. 
Recently, some researchers have been shown that there are various useful, interesting and, functorial topologies on the fundamental group 
which make it a powerful tool for studying some  topological properties of spaces 
(see \cite{A16,A20,B11,B12,B13,B14,B15,CM,F9,F11,J,MPT,PTM17,PTM20,TPM,VZ14}).  
Some well-known topologies on the fundamental group $\pi_1(X,x_0)$ are as follows (see also \cite{SM}).

1. \textbf{The subgroup topology}: A collection $\Sigma$ of subgroups of $G$ is called a \emph{neighbourhood family} if for any $H,K \in \Sigma$, 
there is a subgroup $S \in \Sigma$ such that $S \subseteq H\cap K$. As a result of this property, the collection of all left cosets of elements 
of $\Sigma$ forms 
a basis for a topology on $G$, which is called \emph{the subgroup topology} determined by $\Sigma$ and we denote it by $G^{\Sigma}$. 
The subgroup topology on a group $G$ specified by a neighbourhood family was defined in \cite[Section 2.5]{BS} and considered by some 
recent researchers 
such as \cite{A20,SM,W}.
Since left translation by elements of $G$ determine self-homeomorphisms of $G$, they are homogeneous spaces. 
 Bogley et al. \cite{BS} focused on some general properties of subgroup topologies and by introducing the intersection 
 $S_\Sigma=\cap \{H \ \mid \ H \in \Sigma\}$, called the \emph{infinitesimal} subgroup for the neighbourhood family $\Sigma$, 
 they showed that the closure of the element $g\in G$ is the coset $g S_\Sigma$. 

Let $H$ be a subgroup of a group $G$. Then we define $\Sigma^H$ as $\Sigma^H= \{K\leqslant G \ \mid \ H\subseteq K\}.$
It is easy to see that $\Sigma^H$ is a neighbourhood family. We consider the subgroup topology on $G$ determined by $\Sigma^H$ and denote 
it by $G^H$. 
Note that the infinitesimal subgroup for the neighbourhood family $\Sigma^H$ is $H$. Let $(X,x_0)$ be a pointed topological space and $H$ 
be a subgroup of
 $\pi _1(X,x_0)$, then we denote $\pi_1^H(X,x_0)$ as subgroup topology on $\pi_1(X,x_0)$ determined by $\Sigma^H$.

2. \textbf{The whisker topology}: Let $(X,x_0)$ be a pointed topological space. Define 
$\widetilde{X} = \{[\alpha ] \mid  \alpha :I \rightarrow X , \alpha (0) = x_0\}$ 
as the set of all path-homotopy classes of paths in $X$ starting at $x_0$. Spanier \cite[p. 82]{Span} introduced a topology on $\widetilde{X}$ 
by basic open neighbourhoods 
of $[\alpha]$ of the form $N([\alpha],U) = \{[\alpha \ast \delta] \mid \delta : I \rightarrow U , \delta (0)=\alpha (1) \}$, where $U$ is an 
open neighbourhood of $\alpha (1)$ 
in $X$. This topology on $\widetilde{X}$ is called by Brodskiy et al. \cite{Br12} \emph{the whisker topology} due to $N([\alpha],U)$ consists 
of only homotopy classes 
that differ from $ [\alpha]$ by a small change or “whisker” at it’s end.
The function $p:\widetilde{X} \to X$ as the endpoint projection $p([\alpha])=\alpha (1)$ is continuous and if $X$ is connected, locally path connected, 
and semilocally simply connected, then $p$ is the universal covering map. Clearly $p^{-1}(\{x_0\})=\pi_1(X,x_0)$ and so $\pi_1(X,x_0)$
 is a subspace of $\widetilde{X}$. 
One can consider the subspace topology on $\pi_1(X,x_0)$  inherited from $\widetilde{X}$ which is called \emph{the whisker topology} 
on the fundamental group denoted 
by $\pi_1^{wh}(X, x_0)$  (see \cite{Br12}).

3. \textbf{The compact-open quotient topology}:  Let $(X,x_0)$ be a pointed topological space and $\Omega (X,x_0)$ denote the space of loops in 
$X$ based at $ x_0$. 
There exists the usual compact-open topology on $\Omega (X,x_0)$ which is generated by subbasis sets 
$\langle K,U\rangle=\{\alpha\mid \alpha(K)\subseteq U\}$ for 
compact $K\subseteq [0,1]$ and open $U\subseteq X$. By considering the surjection map $q:\Omega(X,x_0)\to\pi_1(X,x_0),\ q(\alpha)=[\alpha]$ 
one can equip 
$\pi_1(X,x_0)$  with the quotient topology with respect to the map $q:\Omega(X,x_0)\to\pi_1(X,x_0)$ which is denoted by $\pi_{1}^{qtop}(X,x_0)$. 
We refer to this topology as the \emph{natural quotient topology} on $\pi_{1}(X,x_0)$ (see \cite{Biss, B11, F9}).

4. \textbf{The lasso topology}: For any topological space $X$, Brodskiy et al. \cite[Section 3]{Br8} introduced the lasso topology on
 the universal path space 
$\widetilde{X}$ by the basis $  N(\langle \alpha \rangle , \mathcal{U}, W)$, where $ \alpha $ is a path originated at $ x_0 $,  
$W$ is a neighbourhood of the 
endpoint $\alpha(1)$ and $\mathcal{U}$ is an open cover of $X$. A class $\langle \gamma \rangle \in \widetilde{X}$ belongs  to 
$ N(\langle \alpha \rangle , \mathcal{U}, W)$ if and only if this class has a representative of the form $\alpha * L * \beta$ where $L$ belongs to 
$\pi\big(\mathcal{U},\alpha(1)\big)=\{[\prod_{j=1}^{n}u_jv_ju_j^{-1}]\mid u_j\ \emph{are arbitrary paths starting at}\ \alpha(1)\ 
\emph{and each}\ v_j $ 
$\emph{is a loop inside one of}\  U_j\in \mathcal{U}\}$  and $\beta$ is a based loop in $W$ at $\alpha(1)$. There is a bijection between 
the fundamental group 
$\pi_1(X, x_0)$ and the fibre of the base point $p^{-1}(x_0)$, where $p: \widetilde{X}\to X$ is the endpoint projection map. 
Therefore, the fundamental group 
$\pi_1(X, x_0)$ as a subspace of the universal path space $\widetilde{X}$ inherits any topology from $\widetilde{X}$. 
Thus, the collection of sets with the form 
$ N(\langle \alpha\rangle, \mathcal{U}, W)\cap p^{-1}(x_0)$ is a basis for \emph{the lasso topology} on $\pi_1(X, x_0)$, which is denoted by 
$\pi_1^{lasso} (X, x_0)$ (see \cite[Definition 4.11]{Br12}).

5. \textbf{The Tau-topology}:  Brazas in \cite{B13} proved that there exists the finest topology on $\pi_{1}(X,x_{0})$ such that 
$\pi: \Omega(X,x_{0})\rightarrow \pi_{1}(X,x_{0})$ is continuous and $\pi_{1}(X,x_{0})$ is a topological group. 
The fundamental group $\pi_{1}(X,x_{0})$ with this topology is denoted by $\pi_1^{Tau}(X, x_0)$ (see \cite{B13}).

6. \textbf{The Spanier topology}:  As an example of the subgroup topology, the\emph{Spanier subgroup topology} \cite[p. 12]{W} 
was introduced using the collection 
of all Spanier subgroups $\pi(\mathcal{U}, x_0)$ of the fundamental group $\pi_1(X, x_0)$ as the neighbourhood family $ \Sigma^{Span} $. 
Recall that \cite[p. 81]{Span}, 
the Spanier subgroup determined by an open covering $\mathcal{U}$ of $X$ is the normal subgroup $\pi(\mathcal{U}, x_0)$ of 
$\pi_1(X, x_0)$ generated by the 
homotopy class of lollipops $\alpha * \beta * \alpha^{-1}$, where $\beta$ is a loop lying in an element of $U \in \mathcal{U}$ at $\alpha(1)$, 
and $\alpha$ is any 
path originated at $x_0$. The fundamental group equipped with the Spanier subgroup topology is denoted by $\pi_1 ^{Span}(X, x_0)$ 
(see \cite{A20,W}).

7. \textbf{The path Spanier topology}: Torabi et al. \cite[Section 3]{T} replaced open coverings with path open coverings of the space $ X $ 
in the definition of 
Spanier subgroups and introduced path Spanier subgroups by the same way. Recall that a path open covering $ \mathcal{V} $ of the path 
component of $ X $ involve $ x_0 $
 is the collection of open subsets $\lbrace V_{\alpha} \ \vert \ \alpha\in{P(X,x_{0})}\rbrace$ and the path Spanier subgroup 
 $\widetilde{\pi}(\mathcal{V}, x_{0})$ with respect to the path open covering $ \mathcal{V} $ is the subgroup of $ \pi_1(X, x_0)$ consists of 
 all homotopy classes 
 having representatives of the following type:
$$\prod_{j=1}^{n}\alpha_{j}\beta_{j}\alpha^{-1}_{j},$$
where $\alpha_{j}$'s are arbitrary path starting at $x_{0}$ and each $\beta_{j}$ is a loop inside of the open set $V_{\alpha_{j}}$ for 
all $j\in{\lbrace1,2,...,n\rbrace}$. 
Note that the path Spanier subgroup $\widetilde{\pi}(\mathcal{V}, x_{0})$ is not a normal subgroup, in general (see \cite[Example 3.7]{A20}).
If $ \mathcal{U} $ and $ \mathcal{V} $ are two path open covers of a space $ X $, the collection 
$ \mathcal{W}=\lbrace U_{\alpha} \cap V_{\alpha} \ \vert \ \forall \alpha\in{P(X,x_{0}), U_{\alpha} \in \mathcal{U} \ and \ V_{\alpha} 
\in \mathcal{V} }\rbrace $ 
is a refinement of both $ \mathcal{U} $ and $ \mathcal{V} $. Thus, 
$\widetilde{\pi}(\mathcal{W}, x_{0}) \leq  \widetilde{\pi}(\mathcal{U}, x_{0}) \cap \widetilde{\pi}(\mathcal{V}, x_{0}) $, 
which shows that the collection of all 
path Spanier subgroups of the fundamental group forms a neighbourhood family.
For a pointed topological space $ (X,x_0) $, let $ \Sigma^{pSpan} $ be the collection of all path Spanier subgroups of  $ \pi_1(X, x_0) $. 
The subgroup topology determined by $\Sigma^{pSpan} $ is called the \emph{path Spanier topology} which is denoted by 
$ \pi_1^{pSpan}(X, x_0)$ (see \cite{A20,T}).

8. \textbf{The generalized covering topology}: Brazas \cite{B15} introduced a generalized covering subgroup for a pointed topological space 
$ (X,x_0) $. 
Let $\Sigma^{gcov} $ be the collection of all generalized covering subgroups of  $\pi_1(X, x_0)$. The subgroup topology determined 
by $\Sigma^{gcov} $ is 
called the \emph{generalized covering topology} and denoted by $\pi_1^{gcov}(X, x_0)$ (see \cite{A16,A20,B15,FZ7}).

9. \textbf{The shape topology}: The first shape homotopy group of a pointed topological space $(X,x_0)$ is the inverse limit
$$ \check{\pi}_1(X,x_0)=\varprojlim (\pi_1(|N(\mathcal{U})|,U_0),p_{\mathcal{U}\mathcal{V}*},\Lambda),  $$
where the inverse system $\pi_1(|N(\mathcal{U})|,U_0),p_{\mathcal{U}\mathcal{V}*},\Lambda)$ of discrete groups is the first pro-homotopy group 
topologized with the usual inverse limit topology. The \emph{shape topology} on $\pi_1(X, x_0)$ is the initial topology with respect to the 
first shape homomorphism 
$\Psi_1: \pi_1(X, x_0)\rightarrow \check{\pi}_1(X,x_0)$.
Let $\pi_1^{sh}(X, x_0)$ denote $\pi_1(X, x_0)$ equipped with the shape topology group. Note that one can consider the shape topology 
on $\pi_1(X, x_0)$ as a 
subgroup topology with respect to all kernels of homomorphisms induced by maps $X\rightarrow K$ to simplicial complexes $K$ 
(see \cite{B13,BF14,BF24} for more details).

10. \textbf{The thick Spanier topology}:  In order to study the kernel of the first shape homomorphism $\Psi_1$, 
Brazs and Fabel \cite{BF14} introduced the 
thick Spanier subgroup of $X$ with respect to an open cover $\mathcal{U}$ of $X$, denoted by $\Pi^{sp}(\mathcal{U}, x_0)$, 
with a modification in the definition of 
Spanier subgroups, as a subgroup of $\pi_1(X, x_0)$ generated by the homotopy classes of $\alpha * \beta *\gamma* \alpha^{-1}$, 
where $\beta$ and $\gamma$ are 
paths lying in elements of $U_1,U_2 \in \mathcal{U}$, respectively, and $\alpha$ is any path originated at $x_0$. 
The collection of all thick Spanier subgroups 
$\Pi^{sp}(\mathcal{U}, x_0)$ of the fundamental group $\pi_1(X, x_0)$ forms a neighbourhood family $ \Sigma^{tSpan} $ 
(see \cite[Proposition 3.10]{BF14}). 
The fundamental group equipped with the thick Spanier subgroup topology is denoted by $\pi_1 ^{tSpan}(X, x_0)$. 
The thick Spanier subgroup of $X$, denoted by 
${\Pi }^{sp}(X,x_0)$, is the intersection of all thick Spanier subgroups relative to open covers of $X$ \cite[Definition 3.8]{BF14}. 
Note that thick Spanier subgroups 
$\Pi^{sp}(\mathcal{U}, x_0)$ and so $\Pi^{sp}(X, x_0)$ are normal subgroups of $\pi_1(X, x_0)$.


\section{ Some Results on $\pi_1^{\tau}$-Covering Maps}

In this section, first we recall some well-known notions in order to investigate some properties of $\pi_1^{\tau}$-covering maps 
(see \cite{Span, TMP}).

\begin{definition}
Let $p: E \to B$ be a continuous. Then we say that:
\begin{itemize}
\item[(i)] $p$ has the \textit{path lifting property} (pl for abbreviation) if for every path $\alpha : I \to B$, there exists (not necessarily unique) a path 
$\widetilde{\alpha}: I \to E$ such that $p \circ \widetilde{\alpha} = \alpha$.
\item[(ii)] $p$ has the \textit{unique lifting property} (ul) if for every connected, locally path connected space $(Y,y)$ and every continuous map 
$f: (Y,y) \to (B,b)$ with $\pi_1(f)\pi_1(Y,y) \subseteq \pi_1(p)\pi_1(E,e)$ for $e \in p^{-1}(b)$, there exists a unique continuous map $\widetilde{f}: (Y,y) \to (E,e)$ 
such that $p \circ \widetilde{f} = f$.
\item[(iii)] $p$ has the \textit{unique path lifting property} (upl) if for every two paths $\widetilde{\alpha}, \widetilde{\beta}: I \to E$ the equality 
$p \circ \widetilde{\alpha} = p \circ \widetilde{\beta}$ with $\widetilde{\alpha}(0) = \widetilde{\beta}(0)$ implies $\widetilde{\alpha} = \widetilde{\beta}$.
\item[(iv)] $p$ has the \textit{unique path homotopically lifting property} (uphl) if for every two paths $\widetilde{\alpha}, \widetilde{\beta}: I \to E$ the path homotopy 
$p \circ \widetilde{\alpha} \simeq p \circ \widetilde{\beta}$ rel $\dot{I}$ with $\widetilde{\alpha}(0) = \widetilde{\beta}(0)$ implies  
$\widetilde{\alpha} \simeq \widetilde{\beta}$  rel $\dot{I}$.
\item[(v)] A map $p: E \to B$ is said to have the \textit{homotopy lifting property with respect to a space $X$} if given maps
 $f^{\prime}: X \to E$ and $F: X \times I \to B$ such that $F(x,0) = p\circ f^{\prime}(x)$ for $x \in X$, there is a map 
 $F^{\prime}: X \times I \to E$ such that $F^{\prime}(x,0) = f^{\prime}(x)$ for $x \in X$ and $p \circ F^{\prime} = F$.
$$ \xymatrix{
X \times 0 \ar[r]^{f^{\prime}} \ar@{^{(}->}[d] & E \ar[d]^{p}\\
X \times I \ar@{.}^{F^{\prime}}[ur] \ar[r]^{F} & B
}$$
 A map $p: E \to B$ is called a \textit{fibration} if $p$ has the homotopy lifting property with respect to every space.
\item[(vi)] For any continuous map $p: E \to B$, there is a map $\overline{p}: E^I \to \overline{B}$ defined by 
$\overline{p}(\overline{\alpha}) = (\overline{\alpha}(0),p \circ \overline{\alpha})$ for $\overline{\alpha}: I \to E$, 
where $\overline{B} = \{(e,\alpha) \in E \times B^I | \alpha(0) = p(e)\}$. It is easy to see that $p$ has pl property if and only if there exists a map 
$\lambda : \overline{B} \to E^I$ which is a right inverse of $\overline{p}$.
A \textit{lifting function} for $p$ is a \textit{continuous} map $\lambda : \overline{B} \to E^I$ which is a right inverse of $\overline{p}$.  
Spanier \cite[Theorem 2.7.8]{Span} proved that a continuous map $p: E \to B$ is a fibration if and only if there exists a lifting function for $p$.
\end{itemize}
\end{definition}

It can be shown that every covering map is a fibration, but the converse is not true in general (see \cite{Span}). 
Also, a fibration $p: \widetilde{X} \to X$ with upl, where $X$ and $\widetilde{X}$ are connected and locally path connected spaces, is a covering map 
if and only if there is an open covering $\mathcal{U}$ of $X$ and a point $\widetilde{x}_0 \in \widetilde{X}$ such that 
$\pi (\mathcal{U},p(\widetilde{x}_0)) \subseteq\pi_1(p)\pi_1 (\widetilde{X},\widetilde{x}_0)$ (see \cite[Theorem 2.5.12]{Span}). 
The composition and product of two fibrations is also a fibration. Note that the property upl implies uphl property but the converse 
is not true in general. 
Also the properties upl and uphl are equivalent for fibrations (see \cite{TMP}). By \cite[Theorem 2.4.5]{Span} every fibration with upl has ul property 
and so is a generalized covering map. Clearly if a fibration with upl is a local homeomorphism then it is a semicovering map.

\begin{theorem}\label{3.2}
If $p_1: \widetilde{X}_1 \to X$ has pl, $p_2: \widetilde{X}_2 \to X$ has upl and $f: \widetilde{X}_1 \to \widetilde{X}_2$ 
with $p_2 \circ f = p_1$. Then $f$ has pl property.
\end{theorem}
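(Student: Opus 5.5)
The plan is to reduce path lifting for $f$ to path lifting for $p_1$, and then use the unique path lifting property of $p_2$ to compare the resulting path with the given one. Throughout, I read the path lifting property in its usual form: for every point of the fibre over the initial point of a path, there is a lift starting at that point. For $f$ this means the following. Given a path $\alpha: I \to \widetilde{X}_2$ and a point $\widetilde{x}_1 \in \widetilde{X}_1$ with $f(\widetilde{x}_1) = \alpha(0)$, we must produce a path $\widetilde{\alpha}: I \to \widetilde{X}_1$ with $\widetilde{\alpha}(0) = \widetilde{x}_1$ and $f \circ \widetilde{\alpha} = \alpha$. Under the weaker literal reading of Definition (i), with no prescribed starting point, the same argument works whenever $\alpha(0)$ lies in the image of $f$. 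That hypothesis is exactly what is needed to compare starting points.

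\emph{Step 1: lift the projected path through $p_1$.} Set $\gamma = p_2 \circ \alpha$, a path in $X$. Since $p_1(\widetilde{x}_1) = p_2(f(\widetilde{x}_1)) = p_2(\alpha(0)) = \gamma(0)$, the point $\widetilde{x}_1$ lies over $\gamma(0)$. The path lifting property of $p_1$ therefore gives a path $\widetilde{\alpha}: I \to \widetilde{X}_1$ with $p_1 \circ \widetilde{\alpha} = \gamma$ and $\widetilde{\alpha}(0) = \widetilde{x}_1$.

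\emph{Step 2: compare via upl of $p_2$.} Consider the two paths $f \circ \widetilde{\alpha}$ and $\alpha$ in $\widetilde{X}_2$.
\begin{itemize}
\item They have the same image under $p_2$: $p_2 \circ f \circ \widetilde{\alpha} = p_1 \circ \widetilde{\alpha} = \gamma = p_2 \circ \alpha$.
\item They have the same starting point: $f(\widetilde{\alpha}(0)) = f(\widetilde{x}_1) = \alpha(0)$.
\end{itemize}
The unique path lifting property of $p_2$ then forces $f \circ \widetilde{\alpha} = \alpha$. Hence $\widetilde{\alpha}$ is the required lift of $\alpha$ through $f$.

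Continuity of $f \circ \widetilde{\alpha}$ is automatic, so no further checking is needed there. The only delicate point, which I expect to be the main obstacle, is the matching of starting points. Without a prescribed point $\widetilde{x}_1$ over $\alpha(0)$ in $\widetilde{X}_1$, an arbitrary $p_1$-lift of $\gamma$ could start at a point whose image under $f$ differs from $\alpha(0)$, and then upl could not be applied. So the write-up must make explicit that the lift of $\gamma$ is chosen to start at a point of $f^{-1}(\alpha(0))$. It should also state the convention that $p_1$ admits lifts from any point of the fibre.
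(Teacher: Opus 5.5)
Your proof is correct under the fibre-wise convention, and it is the paper's argument: lift $p_2\circ\alpha$ through $p_1$, then use upl of $p_2$ to identify $f\circ\widetilde{\alpha}$ with $\alpha$. The one addition, which the paper needs, is that you make the $p_1$-lift start at a point of $f^{-1}(\alpha(0))$.

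The paper applies upl without checking $f(\widetilde{\alpha}(0))=\alpha(0)$. Under its literal Definition (i) that check cannot be made, and the statement is in fact false. Take $p_2\colon\mathbb{R}\to S^1$ the exponential map, $\widetilde{X}_1=[0,\infty)$, $p_1=p_2|_{[0,\infty)}$ and $f$ the inclusion. Every path in $S^1$ lifts to $\mathbb{R}$, and some integer translate of that lift lies in $[0,\infty)$, so $p_1$ has pl in the sense of Definition (i). Yet the path $t\mapsto -t$ in $\mathbb{R}$ has no lift through $f$. So your convention is the right repair.

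For the same reason, drop or qualify your aside that under the literal reading ``the same argument works whenever $\alpha(0)$ lies in the image of $f$''. In this example $\alpha(0)=0=f(0)$ and no lift exists. The aside holds only if $p_1$ is still assumed to lift from every point of a fibre.
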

\begin{proof}
Let $\alpha : I \to \widetilde{X}_2$ be a path, then $p_2 \circ \alpha$ is a path in $X$. Since $p_1$ has pl there exists a path 
$\widetilde{\alpha}: I \to \widetilde{X}_1$ such that $p_1 \circ \widetilde{\alpha} = p_2 \circ \alpha$. 
Thus $p_2 \circ f \circ \widetilde{\alpha} = p_2 \circ \alpha$ 
and since $p_2$ has upl so $f \circ \widetilde{\alpha} = \alpha$. Hence $f$ has pl property.
\end{proof}

\begin{theorem}\label{3.3}
Let $p: (\widetilde{X},\widetilde{x}_0) \to (X,x_0)$ be a continuous map. Then $\pi_1(p): \pi_1(\widetilde{X},\widetilde{x}_0) \to \pi_1(X,x_0)$ 
is a monomorphism if and only if $p$ has uphl.
\end{theorem}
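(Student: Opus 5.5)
Both directions reduce to the standard trick of converting a statement about two paths with a common start point into a statement about a single loop. Throughout, the uphl property in the statement is understood at the basepoint level: we consider pairs of paths $\widetilde{\alpha},\widetilde{\beta}$ in $\widetilde{X}$ that both begin at $\widetilde{x}_0$. This matches $\pi_1(p)$ being computed at $\widetilde{x}_0$; for path connected $\widetilde{X}$ the choice of basepoint is immaterial, by the usual change-of-basepoint isomorphisms.

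($\Leftarrow$) Assume $p$ has uphl. Let $[\widetilde{\gamma}]\in\pi_1(\widetilde{X},\widetilde{x}_0)$ with $\pi_1(p)([\widetilde{\gamma}])=1$, so $p\circ\widetilde{\gamma}\simeq c_{x_0}$ rel $\dot{I}$. Take $\widetilde{\alpha}=\widetilde{\gamma}$ and $\widetilde{\beta}=c_{\widetilde{x}_0}$. Both start at $\widetilde{x}_0$, and $p\circ\widetilde{\alpha}\simeq p\circ\widetilde{\beta}$ rel $\dot{I}$ because $p\circ c_{\widetilde{x}_0}=c_{x_0}$. By uphl, $\widetilde{\gamma}\simeq c_{\widetilde{x}_0}$ rel $\dot{I}$. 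Hence $\ker\pi_1(p)$ is trivial and $\pi_1(p)$ is a monomorphism.

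($\Rightarrow$) Assume $\pi_1(p)$ is injective. Let $\widetilde{\alpha},\widetilde{\beta}$ start at $\widetilde{x}_0$ and satisfy $p\circ\widetilde{\alpha}\simeq p\circ\widetilde{\beta}$ rel $\dot{I}$. A homotopy rel $\dot{I}$ fixes endpoints, so the projected endpoints agree: $p(\widetilde{\alpha}(1))=p(\widetilde{\beta}(1))$.

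The main obstacle is that we need $\widetilde{\alpha}(1)=\widetilde{\beta}(1)$ in $\widetilde{X}$ itself, and this is required just to form the loop $\widetilde{\alpha}\ast\widetilde{\beta}^{-1}$. Injectivity of $\pi_1(p)$ alone cannot force this. For example, the map $\mathbb{R}\to\mathbb{R}$, $t\mapsto t^2$, induces a monomorphism on trivial groups, yet the paths $s\mapsto s$ and $s\mapsto -s$ from $0$ have the same image and different endpoints. So I would read the uphl condition in the way standard in the literature \cite{TMP}: it is applied to paths with both common initial point and common terminal point. Equivalently, in the hypothesis ``$p\circ\widetilde{\alpha}\simeq p\circ\widetilde{\beta}$ rel $\dot{I}$'' we take $\widetilde{\alpha}$ and $\widetilde{\beta}$ to have the same endpoints in $\widetilde{X}$. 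I would state this reading explicitly before proceeding.

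With that reading, $\widetilde{\alpha}\ast\widetilde{\beta}^{-1}$ is a loop at $\widetilde{x}_0$, and
\[
\pi_1(p)\bigl([\widetilde{\alpha}\ast\widetilde{\beta}^{-1}]\bigr)=[(p\circ\widetilde{\alpha})\ast(p\circ\widetilde{\beta})^{-1}]=1 .
\]
Injectivity gives $\widetilde{\alpha}\ast\widetilde{\beta}^{-1}\simeq c_{\widetilde{x}_0}$ rel $\dot{I}$. Right-multiplying by $[\widetilde{\beta}]$ in the fundamental groupoid then gives $\widetilde{\alpha}\simeq\widetilde{\beta}$ rel $\dot{I}$, so $p$ has uphl.
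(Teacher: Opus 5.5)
Your argument is essentially the paper's: both directions just apply injectivity of $\pi_1(p)$, or uphl, to homotopy classes of loops at $\widetilde{x}_0$. Whether injectivity is phrased through the kernel or through two elements with equal image makes no difference.

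Your counterexample $t\mapsto t^2$ on $\mathbb{R}$ is correct, so the forward implication really is false for uphl as literally defined in (iv). The conclusion there, $\widetilde{\alpha}\simeq\widetilde{\beta}$ rel $\dot{I}$, forces $\widetilde{\alpha}(1)=\widetilde{\beta}(1)$, which injectivity cannot supply. The paper's proof avoids this only tacitly, by writing $\pi_1(p)([\alpha])=\pi_1(p)([\beta])$ as though $\widetilde{\alpha},\widetilde{\beta}$ were loops at $\widetilde{x}_0$. So your explicitly stated reading is exactly the hypothesis the paper actually uses. Your step through the loop $\widetilde{\alpha}\ast\widetilde{\beta}^{-1}$ even covers the slightly more general case of paths from $\widetilde{x}_0$ with a common endpoint.

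Your basepoint-level restriction is also genuinely needed, not merely a convenience, because the theorem does not assume $\widetilde{X}$ path connected. For example, $\{\ast\}\sqcup S^1\to\{x_0\}$ based at $\ast$ has injective $\pi_1(p)$. Yet a generating loop and a constant loop in the circle component violate uphl, even with common endpoints.
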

\begin{proof}
Let $\pi_1(p)$ is a monomorphism and let $p \circ \widetilde{\alpha} \simeq p \circ \widetilde{\beta}$ rel $\dot{I}$, 
$\widetilde{\alpha}(0) = \widetilde{\beta}(0)$. 
Thus $[p \circ \widetilde{\alpha}] = [p \circ \widetilde{\beta}]$ i.e. $\pi_1(p)([\alpha]) = \pi_1(p)([\beta])$. Since $\pi_1(p)$ is a monomorphism so 
$[\alpha] = [\beta]$ i.e $\alpha \simeq \beta$ rel $\dot{I}$. Hence $p$ has uphl.
Conversely, let $p$ has uphl and let $\pi_1(p)([\alpha]) = \pi_1(p)([\beta])$. 
Thus $p \circ \widetilde{\alpha} \simeq p \circ \widetilde{\beta}$ rel $\dot{I}$ 
and $\widetilde{\alpha}(0) = \widetilde{x}_0 = \widetilde{\beta}(0)$ so $\alpha \simeq \beta$ rel $\dot{I}$ because $p$ has uphl. 
Therefore $\pi_1(p)$ is a monomorphism.
\end{proof}

Some researchers proved that if $p: (\widetilde{X},\widetilde{x}_0) \to (X,x_0)$ is a continuous map, 
then $\pi_1^{Tau}$ \cite{B13}, $\pi_1^{qtop}$ \cite[Proposition 3.4]{Biss} , 
$\pi_1^{wh}(p)$ \cite[Lemma 3.1]{A16} and $\pi_1^{lasso}(p)$ \cite[Proposition 4.1]{Br8} preserve continuity.
In the following, we want to investigate this property for subgroup topology.

\begin{theorem}\label{3.4}
Let $p: (\widetilde{X},\widetilde{x}_0) \to (X,x_0)$ be a continuous map and $K \leq \pi_1(p)^{-1}(H)$ where $H \leq \pi_1(X,x_0)$. 
Then $\pi_1(p): \pi_1^K(\widetilde{X},\widetilde{x}_0) \to \pi_1^H(X,x_0)$ is continuous.
\end{theorem}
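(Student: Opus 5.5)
Write $\phi=\pi_1(p)$. Since the left cosets of the members of $\Sigma^H$ form a basis for $\pi_1^H(X,x_0)$, it is enough to show that $\phi^{-1}(gL)$ is open in $\pi_1^K(\widetilde{X},\widetilde{x}_0)$ for every $g\in\pi_1(X,x_0)$ and every subgroup $L$ with $H\subseteq L\leq\pi_1(X,x_0)$. Only the group-homomorphism structure of $\phi$ is needed; topologically, the argument uses nothing about $p$ beyond the fact that it induces $\phi$.

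First, $\phi^{-1}(L)$ is a subgroup of $\pi_1(\widetilde{X},\widetilde{x}_0)$, because $\phi$ is a homomorphism. It contains $\phi^{-1}(H)$, since $H\subseteq L$. By hypothesis $K\leq\phi^{-1}(H)$, so $K\subseteq\phi^{-1}(L)$. Hence $\phi^{-1}(L)\in\Sigma^K$, and it is a basic open set of $\pi_1^K(\widetilde{X},\widetilde{x}_0)$.

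Next, consider $\phi^{-1}(gL)$ for an arbitrary coset. If it is empty, it is open. Otherwise choose $a$ with $\phi(a)\in gL$, so that $gL=\phi(a)L$. For any $b$ we have $\phi(b)\in\phi(a)L$ if and only if $\phi(a^{-1}b)\in L$, that is, $a^{-1}b\in\phi^{-1}(L)$. Therefore
\[
\phi^{-1}(gL)=a\,\phi^{-1}(L),
\]
which is a left coset of a member of $\Sigma^K$ and hence open.

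This argument shows that preimages of basic open sets are open, so $\phi$ is continuous. There is no real obstacle. The only point needing care is the nonempty-coset case, where the preimage is identified with a translate of $\phi^{-1}(L)$; and the key observation is that $\Sigma^K$ contains every subgroup containing $K$, so $\phi^{-1}(L)$ qualifies automatically.
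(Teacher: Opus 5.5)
Your proof is correct and is essentially the paper's argument: the preimage of a basic coset is either empty or a left coset of the pulled-back subgroup, which contains $K$ and is therefore open in $\pi_1^K(\widetilde{X},\widetilde{x}_0)$. The differences are cosmetic. The paper checks only cosets $gH$, implicitly using that each $gL$ with $H\subseteq L$ is a union of cosets of $H$. You handle every $gL$ directly, and you choose $a$ with $\phi(a)\in gL$ rather than $\phi(a)=g$, which is more careful than the paper.
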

\begin{proof}
For continuity of $\pi_1(p)$ it is enough to prove that $\pi_1(p)^{-1}(gH)$ is open in 
$\pi_1^K(\widetilde{X},\widetilde{x}_0)$ for each $g \in \pi_1(X,x_0)$. 
It is clear that $\pi_1(p)^{-1}(gH) = \phi$ or $\pi_1(p)^{-1}(gH) = \cup y_j\pi_1(p)^{-1}(H)$ where $\pi_1(p)(y_j) = g$. 
Since $K \leq \pi_1(p)^{-1}(H)$ so 
$\pi_1(p)^{-1}(gH) = \cup y_j\pi_1(p)^{-1}(H)$ is open in $\pi_1^K(\widetilde{X},\widetilde{x}_0)$. Thus $\pi_1(p)$ is continuous.
\end{proof}

Note that if $p: (\widetilde{X},\widetilde{x}_0) \to (X,x_0)$ is a continuous map and $H = \pi^{sp}(X,x_0)$, then 
$\pi^{sp}(\widetilde{X},\widetilde{x}_0) \subseteq \pi_1(p)^{-1}(H)$. Hence by \ref{3.4} 
$\pi_1(p): \pi_1^{\pi^{sp}(\widetilde{X},\widetilde{x}_0)}(\widetilde{X},\widetilde{x}_0) \to \pi_1^{\pi^{sp}(X,x_0)}(X,x_0)$ is continuous.

Similar to the category of fibration maps over $X$, $Fib(X)$ \cite{Span} and the category of covering, semicovering and generalized covering maps, 
$COV(X), SCOV(X), GCOV(X)$ \cite{A16} one can define the category of $\pi_1^{\tau}$-covering maps over $X$. In the following theorem,
 we show that when a morphism of this category is also a $\pi_1^{\tau}$-covering map.

\begin{theorem}\label{3.5}
Let $p_1: (\widetilde{X}_1,\widetilde{x}_1) \to (X,x_0)$ be a $\pi_1^{\tau}$-covering map, 
$p_2: (\widetilde{X}_2,\widetilde{x}_2) \to (X,x_0)$ be a 
$\pi_1^{\tau}$-covering map with uphl property, and $f: \widetilde{X}_1 \to \widetilde{X}_2$ be a continuous map with $p_2 \circ f = p_1$. 
If $\pi_1^{\tau}(p_2)$ is continuous, then $f$ is a $\pi_1^{\tau}$-covering map.
\end{theorem}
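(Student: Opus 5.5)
The plan is to check the four conditions of Definition 1.1 for $f:(\widetilde{X}_1,\widetilde{x}_1)\to(\widetilde{X}_2,\widetilde{x}_2)$. Here $f(\widetilde{x}_1)=\widetilde{x}_2$, and $\pi_1(\widetilde{X}_2,\widetilde{x}_2)$ carries the topology $\tau$ for which $\pi_1^{\tau}(p_2)$ is assumed continuous. Continuity of $f$ is a hypothesis, and path connectedness of $\widetilde{X}_1$ holds because $p_1$ is a $\pi_1^{\tau}$-covering map. So only path lifting and openness of $\pi_1(f)(\pi_1(\widetilde{X}_1,\widetilde{x}_1))$ in $\pi_1^{\tau}(\widetilde{X}_2,\widetilde{x}_2)$ need real work.

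\textbf{Openness.} This step should be purely algebraic plus the continuity hypothesis. Since $p_2$ has uphl, Theorem \ref{3.3} makes $\pi_1(p_2)$ a monomorphism. From $p_2\circ f=p_1$ we get $\pi_1(p_1)=\pi_1(p_2)\circ\pi_1(f)$. Injectivity of $\pi_1(p_2)$ then gives
$$\pi_1(f)\bigl(\pi_1(\widetilde{X}_1,\widetilde{x}_1)\bigr)=\pi_1(p_2)^{-1}\Bigl(\pi_1(p_1)\bigl(\pi_1(\widetilde{X}_1,\widetilde{x}_1)\bigr)\Bigr).$$
The inclusion ``$\subseteq$'' is trivial. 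For ``$\supseteq$'', if $\pi_1(p_2)(a)=\pi_1(p_2)(\pi_1(f)(b))$ then $a=\pi_1(f)(b)$. The set inside $\pi_1(p_2)^{-1}$ is open in $\pi_1^{\tau}(X,x_0)$ because $p_1$ is a $\pi_1^{\tau}$-covering map. Continuity of $\pi_1^{\tau}(p_2)$ then makes its preimage open, which gives the fourth condition.

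\textbf{Path lifting.} I would imitate Theorem \ref{3.2}. Given a path $\alpha$ in $\widetilde{X}_2$, use the pl property of $p_1$ to lift $p_2\circ\alpha$ to a path $\widetilde{\alpha}$ in $\widetilde{X}_1$, so that $p_2\circ(f\circ\widetilde{\alpha})=p_2\circ\alpha$. To conclude $f\circ\widetilde{\alpha}=\alpha$ I need two things.
\begin{itemize}
\item The starting points must agree: $f(\widetilde{\alpha}(0))=\alpha(0)$. For paths starting at $\widetilde{x}_2$, choose $\widetilde{\alpha}$ starting at $\widetilde{x}_1$. For general $\alpha$, first join $\widetilde{x}_2$ to $\alpha(0)$ by a path, using path connectedness of $\widetilde{X}_2$, and lift the concatenation.
\item Lifts through $p_2$ must be unique. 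Theorem \ref{3.2} needs upl of $p_2$, but we are only given uphl. Uphl alone yields just $f\circ\widetilde{\alpha}\simeq\alpha$ rel $\dot I$.
\end{itemize}

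This gap is the main obstacle. I would first try to upgrade uphl to upl in the present setting. The paper recalls that upl and uphl are equivalent for fibrations, so I would look for the argument of \cite{TMP} that uses only path lifting together with uphl. The idea is to apply uphl to the restricted paths $\alpha|_{[0,t]}$ and $(f\circ\widetilde{\alpha})|_{[0,t]}$ for each $t$, so that their endpoints coincide for every $t$. This forces $f\circ\widetilde{\alpha}=\alpha$ pointwise, at least after choosing $\widetilde{\alpha}$ to be a standard lift compatible with restriction. If that upgrade fails in general, I would state the path lifting part under upl of $p_2$, which uphl implies whenever $p_2$ is a fibration, and invoke Theorem \ref{3.2} directly.
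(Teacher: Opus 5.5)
Your openness step is exactly the paper's. You use injectivity of $\pi_1(p_2)$ from Theorem \ref{3.3}, the identity $\mathrm{Im}\,\pi_1(f)=\pi_1(p_2)^{-1}(\mathrm{Im}\,\pi_1(p_1))$, and continuity of $\pi_1^{\tau}(p_2)$.

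For path lifting, the paper's proof only says ``by Theorem \ref{3.2} $f$ has pl''. That theorem assumes upl of $p_2$, while only uphl is given, and its proof also tacitly assumes $f(\widetilde{\alpha}(0))=\alpha(0)$. So the gap you flagged is a real gap in the paper's argument. Your restriction trick is the right repair, and you should state it without the hedge.

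No standard lift is needed. Take any path $\widetilde{\alpha}$ with $p_1\circ\widetilde{\alpha}=p_2\circ\alpha$ and $f(\widetilde{\alpha}(0))=\alpha(0)$, and any $t\in(0,1]$. The paths $s\mapsto\alpha(ts)$ and $s\mapsto f(\widetilde{\alpha}(ts))$ have the same initial point and literally equal images under $p_2$. Definition 3.1(iv) then gives a homotopy rel $\dot{I}$ between them, which forces $f(\widetilde{\alpha}(t))=\alpha(t)$.

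Hence uphl, as defined in the paper, implies upl for every map, contrary to the paper's remark after Definition 3.1. Theorem \ref{3.2} then applies. Your fallback to upl or fibrations would only prove a weaker statement, so drop it.

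State explicitly which readings you use, because the statement is false under the others.

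\emph{uphl.} The restriction trick uses Definition 3.1(iv) at full strength, for arbitrary paths, not merely injectivity of $\pi_1(p_2)$. Suppose uphl only meant injectivity. Take $X=[0,1]$, let $\widetilde{X}_2$ be two arcs glued at an endpoint and folded onto $X$, let $p_1=\mathrm{id}$, and let $f$ be the inclusion of one arc. All fundamental groups are trivial, so every hypothesis holds, yet $f$ is not onto and has no pl.

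\emph{pl.} Choosing $\widetilde{\alpha}(0)=\widetilde{x}_1$ requires pl in Spanier's sense: lifts from any prescribed point of the fibre, as implicit in Definition 3.1(vi). The literal Definition 3.1(i) is not enough. Take $p_2:\mathbb{R}\to S^1$, $t\mapsto e^{2\pi i t}$, with $\widetilde{X}_1=(-\infty,0]$, $p_1=p_2|_{\widetilde{X}_1}$, $f$ the inclusion and $\tau$ discrete. Every path in $S^1$ has some lift into $(-\infty,0]$, so $p_1$ satisfies Definition 3.1(i) and is a $\pi_1^{\tau}$-covering in that weak sense, but $f$ has no pl.

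Under Spanier's reading you must also lift through $f$ from an arbitrary prescribed $e\in f^{-1}(\alpha(0))$. Lift $p_2\circ\alpha$ through $p_1$ starting at $e$ and use the uniqueness above; the concatenation detour is then unnecessary.
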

\begin{proof}
Since $p_2$ has uphl so $\pi_1(p_2)$ is a monomorphism. Thus $\mathrm{Im}\pi_1(f) =$\\  
$ \pi_1(p_2)^{-1} (\pi_1(p_2) (\mathrm{Im}\pi_1(f)))$. 
On the other hand, $\pi_1(p_2)(\mathrm{Im}\pi_1(f)) = \mathrm{Im}\pi_1(p_1)$ which is open in 
$\pi_1^{\tau}(X,x_0)$ because $p_1$ is a $\pi_1^{\tau}$-covering map. Since $\pi_1^{\tau}(p_2)$ 
is continuous so $\mathrm{Im}\pi_1(f)$ is open in 
$\pi_1^{\tau}(\widetilde{X}_2,\widetilde{x}_2)$. By \ref{3.2} $f$ has pl property. Hence $f$ is a $\pi_1^{\tau}$-covering map.
\end{proof}

\begin{theorem}\label{3.6}
Let $p_1: (\widetilde{X}_1,\widetilde{x}_1) \to (X_1,x_1)$ and $p_2: (\widetilde{X}_2,\widetilde{x}_2) \to (X_2,x_2)$ 
be two $\pi_1^{\tau}$-covering maps. 
If $\psi : \pi_1^{\tau}(X_1 \times X_2,(x_1,x_2)) \to \pi_1^{\tau}(X_1,x_1) \times \pi_1^{\tau}(X_2,x_2)$ ; 
$\psi ([f,g]) = ([f],[g])$ is homeomorphism, 
then $p_1 \times p_2: (\widetilde{X}_1 \times \widetilde{X}_2,(\widetilde{x}_1,\widetilde{x}_2)) \to (X_1 \times X_2,(x_1,x_2))$ 
is a $\pi_1^{\tau}$-covering map.
\end{theorem}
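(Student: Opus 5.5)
We check the four conditions in the definition of a $\pi_1^{\tau}$-covering map for $p_1\times p_2$, with base point $(\widetilde{x}_1,\widetilde{x}_2)$.

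The first three conditions are routine. Continuity of $p_1\times p_2$ follows from the universal property of the product topology. Path connectedness of $\widetilde{X}_1\times\widetilde{X}_2$ holds because a product of path connected spaces is path connected: join the coordinates separately. For the path lifting property, write a path $\alpha$ in $X_1\times X_2$ as $\alpha=(\alpha_1,\alpha_2)$ with $\alpha_i=\mathrm{pr}_i\circ\alpha$. Lift each $\alpha_i$ to $\widetilde{\alpha}_i$ using pl of $p_i$. Then $(\widetilde{\alpha}_1,\widetilde{\alpha}_2)$ is a path with $(p_1\times p_2)\circ(\widetilde{\alpha}_1,\widetilde{\alpha}_2)=\alpha$. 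No initial-point condition is needed, since the paper's pl only asks for some lift.

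The substance is the openness condition. Let $G_i=\pi_1(p_i)(\pi_1(\widetilde{X}_i,\widetilde{x}_i))$, which is open in $\pi_1^{\tau}(X_i,x_i)$ by hypothesis. We first show that
$$\pi_1(p_1\times p_2)\bigl(\pi_1(\widetilde{X}_1\times\widetilde{X}_2,(\widetilde{x}_1,\widetilde{x}_2))\bigr)=\psi^{-1}(G_1\times G_2).$$

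For the inclusion "$\subseteq$", take a loop $\widetilde{\gamma}=(\widetilde{\gamma}_1,\widetilde{\gamma}_2)$ at $(\widetilde{x}_1,\widetilde{x}_2)$. Its image class is $[(p_1\circ\widetilde{\gamma}_1,p_2\circ\widetilde{\gamma}_2)]$, and $\psi$ sends this to $([p_1\circ\widetilde{\gamma}_1],[p_2\circ\widetilde{\gamma}_2])\in G_1\times G_2$.

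For the inclusion "$\supseteq$", take $[(f,g)]$ with $[f]=[p_1\circ\widetilde{f}]$ and $[g]=[p_2\circ\widetilde{g}]$ for loops $\widetilde{f},\widetilde{g}$ at $\widetilde{x}_1,\widetilde{x}_2$. Path homotopies $f\simeq p_1\circ\widetilde{f}$ and $g\simeq p_2\circ\widetilde{g}$ rel $\dot I$ combine coordinatewise into a homotopy $(f,g)\simeq (p_1\times p_2)\circ(\widetilde{f},\widetilde{g})$ rel $\dot I$. Hence $[(f,g)]$ lies in the image. This step uses only that $\psi$ is well-defined and that homotopies in a product are pairs of homotopies. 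It is essentially the standard isomorphism $\pi_1(X_1\times X_2)\cong\pi_1(X_1)\times\pi_1(X_2)$, compatible with $\pi_1(p_1\times p_2)$ and $\pi_1(p_1)\times\pi_1(p_2)$.

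Finally, $G_1\times G_2$ is open in the product topology on $\pi_1^{\tau}(X_1,x_1)\times\pi_1^{\tau}(X_2,x_2)$. Since $\psi$ is a homeomorphism, and in particular continuous, $\psi^{-1}(G_1\times G_2)$ is open in $\pi_1^{\tau}(X_1\times X_2,(x_1,x_2))$. Therefore $p_1\times p_2$ is a $\pi_1^{\tau}$-covering map.

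The main obstacle is only the bookkeeping in the identification of the image subgroup with $\psi^{-1}(G_1\times G_2)$. The role of the hypothesis on $\psi$ is precisely to transport openness of the product set; only continuity of $\psi$ is actually needed.
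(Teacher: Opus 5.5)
Your proof is correct and follows the paper's argument: you lift paths coordinatewise, identify the image of $\pi_1(p_1\times p_2)$ with $\psi^{-1}(G_1\times G_2)$, and use $\psi$ to transport the openness of $G_1\times G_2$. You also check continuity and path connectedness of the product and justify the image identity, which the paper leaves implicit or merely asserts, and you are right that only continuity of $\psi$ is needed.
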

\begin{proof}
Let $\alpha : I \to X_1 \times X_2$ be a path then $\alpha = (\theta_1 \circ \alpha,\theta_2 \circ \alpha)$, 
where $\theta_i$ are the projection maps for $i = 1,2$. 
Since $p_i$ has pl property, there is a path $\widetilde{\alpha}_i: I \to \widetilde{X}_i$ 
with $p_i \circ \widetilde{\alpha}_i = \theta_i \circ \alpha$ for $i = 1,2$. 
Put $\widetilde{\alpha} = (\widetilde{\alpha}_1,\widetilde{\alpha}_2)$, then $(p_1 \times p_2) \circ \widetilde{\alpha} = \alpha$. 
Thus $p_1 \times p_2$ has pl property.
Since $p_i$ is a $\pi_1^{\tau}$-covering map, $\mathrm{Im}(\pi_1(p_i))$ is open in $\pi_1^{\tau}(X_i,x_i)$ for $i = 1,2$.
Since  $\psi (\mathrm{Im}(\pi_1(p_1 \times p_2))) = \mathrm{Im}(\pi_1(p_1)) \times \mathrm{Im}(\pi_1(p_2))$ and $\psi$ is a homeomorphism, 
$\mathrm{Im}(\pi_1(p_1 \times p_2))$ is open in 
$\pi_1^{\tau}(X_1 \times X_2,(x_1,x_2))$. Hence $p_1 \times p_2$ is a $\pi_1^{\tau}$-covering map.
\end{proof}

Note that  $\pi_1^{qtop}$ (under some conditions) 
\cite[Lemma 41]{BF15}, $\pi_1^{Tau}$ \cite[Proposition 3.19]{B13}, 
$\pi_1^{lasso}$ \cite[Remark 4.15]{PA19} and $\pi_1^H$  (under some conditions)
\cite[Theorem 2.6]{SM} preserve finite products.

In the following theorem, we show that the well-known notions, covering, semicovering, 
generalized covering and fibration maps are special cases of this new notion 
$\pi_1^{\tau}$-covering, for appropriate topologies on $\pi_1(X, x_0)$. 
We recall that a continuous map $p: (\widetilde{X},\widetilde{x}_0) \to (X,x_0)$ 
is a semicovering map if and only if it is a local homeomorphism with pl and upl properties (see \cite[Theorem 2.4]{KMT}). Also, a continuous map 
$p: (\widetilde{X},\widetilde{x}_0) \to (X,x_0)$ is called a generalized covering map if $p$ has ul property (see \cite[Definition 2.1]{A16}).
\begin{theorem}\label{3.7}
Let $p: (\widetilde{X},\widetilde{x}_0) \to (X,x_0)$ be a continuous map.
\begin{itemize}
\item[(i)] If $p$ is a covering map, then $p$ is a $\pi_1^{Span}$-covering map. 
Moreover, if $p$ is a $\pi_1^{Span}$-covering map with unique lifting property, 
then $p$ is a covering map.
\item[(ii)] If $p$ is a semicovering map, then $p$ is a $\pi_1^{pSpan}$-covering map. 
Moreover, if $p$ is a $\pi_1^{pSpan}$-covering map with unique lifting property, 
then $p$ is a semicovering map.
\item[(iii)] If $p$ is a generalized covering map, then $p$ is a $\pi_1^{gcov}$-covering map.
\item[(iv)] If $p$ is a fibration map, then $p$ is a $\pi_1^{\pi^{fib}(X,x_0)}$-covering map, where $\pi^{fib}(X,x_0)$ is the intersection of all 
$\mathrm{Im}\pi_1(q)$ for any fibration map $q:(E,e_0)\rightarrow (X,x_0)$.
\end{itemize}
\end{theorem}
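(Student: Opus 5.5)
The plan is to check the four clauses separately. In each, continuity holds by hypothesis. For the forward directions, path connectedness of $\widetilde{X}$ is assumed, as it is built into the definitions of covering, semicovering, generalized covering and (for our purposes) fibration maps over path connected bases. So the real content is to verify two things: the path lifting property, and openness of $H:=\pi_1(p)(\pi_1(\widetilde{X},\widetilde{x}_0))$ in the relevant topology.

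Since all the topologies involved are subgroup topologies determined by a neighbourhood family $\Sigma$, a subgroup $H$ is open as soon as it contains some member of $\Sigma$. Indeed, if $S\in\Sigma$ and $S\subseteq H$, then $H=\bigcup_{h\in H}hS$ is a union of basic open cosets.

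\textbf{Clause (i), forward direction.} A covering map is a fibration, so it has pl. By \cite[Theorem 2.5.12]{Span} there is an open cover $\mathcal{U}$ with $\pi(\mathcal{U},x_0)\subseteq H$, so $H$ is open in $\pi_1^{Span}(X,x_0)$.

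\textbf{Clause (i), converse.} Suppose $H$ is open in $\pi_1^{Span}$. Then the coset $H$ contains a basic neighbourhood $g\pi(\mathcal{U},x_0)$, and since $H$ is a subgroup this gives $\pi(\mathcal{U},x_0)\subseteq H$. Together with ul, this makes $p$ isomorphic to the classical Spanier covering $p_H:\widetilde{X}_H\to X$, which is a covering map by \cite[Section 2.5 Theorem 13]{Span} provided $X$ is locally path connected. I expect this to be the main obstacle. The identification of $p$ with $p_H$ uses the standard fact that a generalized covering map with image subgroup $H$ is equivalent to $p_H$. This requires local path connectedness of $\widetilde{X}$ and $X$, so I would either invoke it from \cite{B15,A16} or assume these standing hypotheses implicitly.

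\textbf{Clause (ii).} The argument runs in parallel. A semicovering map has pl, and by the results of Torabi et al.\ \cite{T} its image subgroup contains a path Spanier subgroup $\widetilde{\pi}(\mathcal{V},x_0)$, hence is open in $\pi_1^{pSpan}$. For the converse, openness gives $\widetilde{\pi}(\mathcal{V},x_0)\subseteq H$. By \cite{T}, this makes $p_H$ a semicovering map, and the ul property identifies $p$ with $p_H$ exactly as in (i).

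\textbf{Clause (iii).} A generalized covering map has pl, since this is part of the standard characterization via $p_H$ (or one lifts $\alpha$ using ul applied to $Y=I$). By definition, $H$ is itself a generalized covering subgroup, so $H\in\Sigma^{gcov}$, and $H$ is open.

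\textbf{Clause (iv).} A fibration has the homotopy lifting property with respect to a point, which gives pl. Moreover $\pi^{fib}(X,x_0)\subseteq \mathrm{Im}\,\pi_1(p)=H$ by the definition of $\pi^{fib}$ as an intersection over all fibrations. So $H\in\Sigma^{\pi^{fib}(X,x_0)}$ and is open in $\pi_1^{\pi^{fib}(X,x_0)}(X,x_0)$.
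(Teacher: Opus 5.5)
Your proposal is correct and follows the paper's proof essentially clause by clause. In each case you check pl and show that $H=\pi_1(p)\pi_1(\widetilde{X},\widetilde{x}_0)$ contains a member of the relevant neighbourhood family, and for the converses in (i) and (ii) you identify $p$ with $p_H$ via the unique lifting property; the paper builds this equivalence by hand, constructing $\varphi$ from the lifting criterion for $p_H$ and $\psi$ from ul of $p$, rather than citing Brazas's equivalence lemma. Your explicit flagging of the tacit hypotheses is an improvement, since the paper's proof uses them silently: path connectedness of $\widetilde{X}$ (genuinely needed for fibrations in (iv)), and local path connectedness of $X$ and $\widetilde{X}$ (for Spanier's Theorem 2.5.13 and the lifting arguments).
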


\begin{proof}
$(i)$ Since $p$ is a covering map, by \cite[Section 2.5 Theorem 12]{Span}, there is an open covering $\mathcal{U}$ of $X$ such that 
$\pi(\mathcal{U},x_0) \subseteq \pi_1(p)\pi_1(\widetilde{X},\widetilde{x}_0)$. By the Spanier topology on  $\pi_1(X, x_0)$, $\pi(\mathcal{U},x_0)$ 
is an open subgroup of $\pi_1^{Span}(X,x_0)$.   Since $\pi_1(p)\pi_1(\widetilde{X},\widetilde{x}_0)$ is a union of some cosets of 
$\pi(\mathcal{U},x_0)$, it is open in $\pi_1^{Span}(X,x_0)$. 
Note that every covering map has unique path lifting property, hence $p$ is a $\pi_1^{Span}$-covering map.

Conversely, let $p: (\widetilde{X},\widetilde{x}_0) \to (X,x_0)$ be a $\pi_1^{Span}$-covering map with unique lifting property. Put 
$H = \pi_1(p)\pi_1(\widetilde{X},\widetilde{x}_0)$, then $H$ is open in $\pi_1^{Span}(X,x_0)$. Since $H$ is a subgroup, there exists an open cover 
$\mathcal{U}$ of $X$ such that $\pi(\mathcal{U},x_0) \subseteq H$. By \cite[Theorem 2.5.13]{Span}, $p_H : \widetilde{X}_H \to X$ 
is a covering map and so by lifting criterion there exists a continuous map $\varphi : \widetilde{X} \to \widetilde{X}_H$ 
such that $p_H \circ \varphi = p$. 
Also, since $p$ has unique lifting property there exists a continuous map $\psi : \widetilde{X}_H \to \widetilde{X}$ 
such that $p \circ \psi = p_H$. 
By uniqueness $\psi \circ \varphi = \varphi \circ \psi =1$ i.e. $\varphi$ is a homeomorphism. 
Thus $p$ and $p_H$ are equivalent and hence $p$ is a covering map since 
$p_H$ is covering map.

$(ii)$ Since $p$ is a semicovering map, by \cite[Theorem 4.1]{T} there exists a path open covering $\mathcal{V}$ of $X$ such that 
$\widetilde{\pi}(\mathcal{V},x_0) \subseteq \pi_1(p)\pi_1(\widetilde{X},\widetilde{x}_0)$. 
By the path Spanier topology on  $\pi_1(X, x_0)$, $\pi(\mathcal{V},x_0)$ 
is an open subgroup of $\pi_1^{pSpan}(X,x_0)$.   
Since $\pi_1(p)\pi_1(\widetilde{X},\widetilde{x}_0)$ is a union of some cosets of $\pi(\mathcal{V},x_0)$, 
it is open in $\pi_1^{pSpan}(X,x_0)$. Note that every semicovering map has path lifting property, hence $p$ is a $\pi_1^{pSpan}$-covering map.

Conversely, let $p: (\widetilde{X},\widetilde{x}_0) \to (X,x_0)$ be a $\pi_1^{pSpan}$-covering map with unique lifting property. Put 
$H = \pi_1(p)\pi_1(\widetilde{X},\widetilde{x}_0)$, then $H$ is open in $\pi_1^{pSpan}(X,x_0)$. 
Since $H$ is a subgroup, there exists a path open cover 
$\mathcal{V}$ of $X$ such that $\widetilde{\pi}(\mathcal{V},x_0) \subseteq H$. By \cite[Theorem 4.1]{T}, $p_H : \widetilde{X}_H \to X$ 
is a semicovering map. Since every semicovering map is a generalized covering map and so has ul property, 
there exists a continuous map $\varphi : \widetilde{X} \to \widetilde{X}_H$ such that 
$p_H \circ \varphi = p$. 
Also, since $p$ has unique lifting property there exists a continuous map $\psi : \widetilde{X}_H \to \widetilde{X}$ such that $p \circ \psi = p_H$. 
By uniqueness $\psi \circ \varphi = \varphi \circ \psi =1$ i.e. $\varphi$ is a homeomorphism. 
Thus $p$ and $p_H$ are equivalent and hence $p$ is also a semicovering map 
since $p_H$ is semicovering map.

$(iii)$ Since $p$ is a generalized covering map, then $\pi_1(p)\pi_1(\widetilde{X},\widetilde{x}_0)$ 
is a generalized covering subgroup for $(X,x_0)$ (see \cite{B15}). 
By the definition of the generalized covering topology on $\pi_1(X, x_0)$, $\pi_1(p)\pi_1(\widetilde{X},\widetilde{x}_0)$ 
is open in $\pi_1^{gcov}(X,x_0)$. 
Also, we know that every generalized covering map has path lifting property. Therefore $p$ is a $\pi_1^{gcov}$-covering map. 

$(iv)$ Note that every fibration map has pl property.
\end{proof}

Brazas in \cite[Lemma 5.10]{B15} proved that every generalized covering map $p:(\widehat{X},\widehat{x}) \to (X,x_0)$ is equivalent to 
$p_H:(\widetilde{X}_H,\widetilde{x}_0) \to (X,x_0)$ where $H = \pi_1(p)\pi_1(\widehat{X},\widehat{x})$.
Abdullahi et. al in \cite[Corollary 3.10]{A16} proved that if $H$ is a generalized covering subgroup, 
then $H$ is a closed subgroup of $\pi_1^{wh}(X,x_0)$. 
Also in \cite[Proposition 3.2]{A16} it is proved that $\pi_1^{wh}(X,x_0)$ is a homogenous space consequently 
if $H$ is a closed subgroup of $\pi_1^{wh}(X,x_0)$ 
and $[\pi_1(X,x_0):H] < \infty$, then $H$ is also open. By these results we have the following relationship between 
$\pi_1^{wh}$-covering maps and generalized covering maps.

\begin{theorem}\label{3.8}
Let $p_H:(\widetilde{X}_H,\widetilde{x}_0) \to (X,x_0)$ be a generalized covering map. 
If $[\pi_1(X,x_0):H] < \infty$, then $p_H$ is a $\pi_1^{wh}$-covering map.
\end{theorem}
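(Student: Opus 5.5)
We check the four conditions in the definition of a $\pi_1^{wh}$-covering map for $p_H$, using the results quoted just before the statement.

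\emph{Topological conditions.} Continuity of $p_H$ is part of the hypothesis. The path lifting property and path connectedness of $\widetilde{X}_H$ come from the standard lift $\widetilde{f}(t)=\langle f_t\rangle_H$ recalled in the introduction (\cite[Lemma 10.32]{Rot}). Every point $\langle f\rangle_H$ is joined to $\widetilde{x}_0$ by the standard lift of $f$, so $\widetilde{X}_H$ is path connected.

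\emph{Identifying the image subgroup.} Since $p_H$ is a generalized covering map, it has the unique lifting property. Hence, by \cite[Lemma 5.9]{B15} as quoted in Remark \ref{1.2}, we have $\pi_1(p_H)(\pi_1(\widetilde{X}_H,\widetilde{x}_0))=H$. (Remark \ref{1.2} already gives the inclusion $H\subseteq \pi_1(p_H)(\pi_1(\widetilde{X}_H,\widetilde{x}_0))$ directly.) It follows that $H$ is a generalized covering subgroup of $\pi_1(X,x_0)$.

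\emph{Openness in the whisker topology.} By \cite[Corollary 3.10]{A16}, $H$ is closed in $\pi_1^{wh}(X,x_0)$. By \cite[Proposition 3.2]{A16}, $\pi_1^{wh}(X,x_0)$ is homogeneous; more precisely, left translations $L_g$ are homeomorphisms. So each left coset $gH$ is closed. Write $\pi_1(X,x_0)=H\cup g_1H\cup\cdots\cup g_nH$, which is a finite disjoint union by the index hypothesis. The complement of $H$ is then a finite union of closed sets, hence closed, and so $H$ is open. This gives the fourth condition, and $p_H$ is a $\pi_1^{wh}$-covering map.

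The main obstacle is the translation step. Homogeneity as bare abstract homogeneity of the space does not by itself make cosets closed; one needs that left multiplication by $g$ is a homeomorphism of $\pi_1^{wh}(X,x_0)$. I would verify this directly. The map $[\alpha]\mapsto[g\ast\alpha]$ sends the basic set $N([\alpha],U)\cap\pi_1$ onto $N([g\ast\alpha],U)\cap\pi_1$, and its inverse is translation by $g^{-1}$. Hence it is a homeomorphism, which is what \cite[Proposition 3.2]{A16} provides.
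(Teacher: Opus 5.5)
Your proof is correct and follows the same route as the paper. $H$ is a generalized covering subgroup, hence closed in $\pi_1^{wh}(X,x_0)$ by Corollary 3.10 of Abdullahi et al., and finite index together with closed left cosets makes $H$ open. You also make explicit two points the paper leaves implicit, and both are sound: that $\pi_1(p_H)(\pi_1(\widetilde{X}_H,\widetilde{x}_0))=H$ (via Brazas's Lemma 5.9), and that left translation $[\alpha]\mapsto[g\ast\alpha]$ is a homeomorphism because it maps $N([\alpha],U)\cap\pi_1(X,x_0)$ onto $N([g\ast\alpha],U)\cap\pi_1(X,x_0)$.
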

\begin{proof}
Since $p_H$ is a generalized covering map, $H$ is a generalized covering subgroup. 
Then by \cite[Corollary 3.10]{A16} $H$ is closed in $\pi_1^{wh}(X,x_0)$ 
and so $H$ is open in $\pi_1^{wh}(X,x_0)$  because  $[\pi_1(X,x_0):H] < \infty$. 
Thus $p_H:(\widetilde{X}_H,\widetilde{x}_0) \to (X,x_0)$ is a $\pi_1^{wh}$-covering map.
\end{proof}

\section{ Comparison of $\pi_1^{\tau}$-Covering Maps with respect to Topologies on the Fundamental Group}

In this section, we are going to compare $\pi_1^{\tau}$-covering maps of a space $X$ 
for several famous topologies on the fundamental group of $\pi_1(X,x_0)$ 
 which have been reviewed in Section 2. First, we review some results obtained by researchers for comparison of the above topologies.
 
 Fischer and Zastrow \cite[Lemma 2.1]{FZ7} showed that the whisker topology is finer than the $ qtop $-topology on the universal path space 
 $ \widetilde{X} $ 
 for any space $ X $. Clearly, the result will hold for the fundamental group $\pi_{1}(X,x_{0})$ as a subspace of $ \widetilde{X} $. 
 By considering the definitions of $\pi_{1}^{qtop}(X,x_0)$ and $\pi_1^{Tau}(X, x_0)$ it is easy to see that 
 $\pi_{1}^{qtop}(X,x_0)$ is finer than $\pi_1^{Tau}(X, x_0)$ 
 (see \cite{B13}).
 It is proved in \cite{A20} that the lasso topology on the fundamental group $\pi_1(X, x_0)$ coincides with the Spanier subgroup topology.
Since every Spanier subgroup of the fundamental group $\pi_1(X, x_0)$ is also a path Spanier subgroup, for any pointed space $ (X, x_0) $ 
the path Spanier topology on the fundamental group, $\pi_1^{pSpan}(X, x_0)$, is finer than the Spanier topology, $\pi_1^{Span}(X, x_0)$.
Also using  \cite[Proposition 3.16]{B13} the authors of \cite{A20} showed that
if $X$ is a locally path connected space, then $\pi_1^{pSpan}(X, x_0)$ is coarser than $\pi_1^{\tau}(X, x_0)$ (see \cite[Corollary 3.15]{A20}).
 It is shown in \cite[Proposition 3.2]{BF24} that the shape topology of $\pi_1^{sh}(X, x_0)$ is coarser than that of $\pi_1^{\tau}(X, x_0)$.
Note that by \cite[Proposition 3.5]{BF14} one can show that $\pi_1^{Span}(X, x_0)$ is finer than $\pi_1^{tSpan}(X, x_0)$. 
Also by \cite[Proposition 5.8]{BF14} $\pi_1^{tSpan}(X, x_0)$ is finer than $\pi_1^{sh}(X, x_0)$.

One can summarize the relationship between the mentioned topologies on the fundamental group by the above statements in the following chain 
(note that we use the symbol $\preccurlyeq$ to show the finer topology on a group. For example, $G^{\tau_1}\preccurlyeq  G^{\tau_2}$ means that 
$\tau_2$ is finer than $\tau_1$ and $G^{\tau_1}\prec  G^{\tau_2}$ means that $\tau_2$ is strictly finer than $\tau_1$). 
$$ \pi_1^{sh}(X, x_0) \preccurlyeq \pi_1^{tSpan}(X,x_0) \preccurlyeq \pi_1^{lasso} (X, x_0)=\pi_1^{Span}(X, x_0) \preccurlyeq$$ 
$$\pi_1^{pSpan}(X, x_0) \preccurlyeq \pi_1^{\tau}(X, x_0) \preccurlyeq \pi_1^{qtop}(X, x_0)  \preccurlyeq \pi_1^{wh}(X, x_0).$$

There are some examples to show that most of the above topologies are strictly finer than the previous one. As a good example, 
consider the {\it Hawaiian Earring} space, $\mathbb{HE}$. Using the results of \cite{A20, B13,F9,J} 
there exists the following chain of strictly finer topologies on the fundamental group of $\mathbb{HE}$: 
$$\pi_1^{Span}(\mathbb{HE}, 0) \prec \pi_1^{pSpan}(\mathbb{HE}, 0),$$ 
$$\pi_1^{sh}(\mathbb{HE}, 0)\prec \pi_1^{\tau}(\mathbb{HE}, 0)\prec \pi_1^{qtop}(\mathbb{HE}, 0)\prec \pi_1^{wh}(\mathbb{HE}, 0).$$

For the generalized covering topology, it is proved in \cite[Proposition 3.24]{A20} that 
$$\pi_1^{qtop}(X, x_0)  \preccurlyeq \pi_1^{gcov}(X, x_0),$$
when $X$ is a connected and locally path connected. Note that $\pi_1^{wh}(\mathbb{HE}, 0)  \prec \pi_1^{gcov}(\mathbb{HE}, 0)$ and 
$\pi_1^{gcov}(\mathbb{HA}, b)  \prec \pi_1^{wh}(\mathbb{HA}, b)$, where $\mathbb{HA}$ is the {\it Harmonic Archipelago} 
and $b\neq 0$ (see \cite{A16,A20,FZ7}). Therefore, the whisker topology and the generalized covering topology can not be comparable, in general.
Moreover, if $X$ is locally path connected, paracompact and Hausdorff space , 
then by \cite[Theorem 7.6]{BF14} one can see that the equality $\pi_1^{sh}(X, x_0)=\pi_1^{tSpan}(X,x_0)=\pi_1^{Span}(X, x_0)$ holds. 

We can compare most of the above topologies on $\pi _1(X,x_0)$ in view of the subgroup topology for some famous subgroups in the following chain of subgroups 
of the fundamental group ${\pi }_1(X,x_0)$ (see \cite{A16}),
$$\{e\}\leq {\pi }^s(X,x_0)\leq {\pi }^{sg}(X,x_0)\leq \pi ^{gc}(X,x_0)\leq {\widetilde{\pi }}^{sp}(X,x_0)\leq {\pi }^{sp}(X,x_0)\leq 
\pi_1(p)\pi_1(\widetilde{X},\widetilde{x}_0), $$
where ${\pi }^s(X,x_0)$ is the subgroup of all small loops at $x_0$ \cite{V}, ${\pi }^{sg}(X,x_0)$ is the subgroup of all small generated loops \cite{V},
$\pi^{gc}(X,x_0)$ is the intersection of generalized covering subgroups \cite{A16}, ${\pi }^{sp}(X,x_0)$ is the Spanier group of $X$, 
the intersection of the Spanier subgroups relative to open covers of $X$ \cite[Definition 2.3]{FR},  ${\widetilde{\pi }}^{sp}(X,x_0)$ is the path Spanier group, 
i.e, the intersection of all path Spanier subgroups ${\widetilde{\pi }}(\mathcal{V},x_0)$, where $\mathcal{V}$ is a path open cover of $X$ \cite[Section 3]{T},    
and $\pi_1(p)\pi_1(\widetilde{X},\widetilde{x}_0)\cong \pi_1(\widetilde{X},\widetilde{x}_0) $ is the image of the induced homomorphism of an arbitrary covering map 
$p:(\widetilde{X},\widetilde{x}_0)\rightarrow (X,x_0)$. 

Recently, the authors in \cite{SM} by presenting some more results on the subject, have sum up the comparison of various famous topologies on the fundamental group 
in the following diagram (note that $A \longrightarrow B$ means that $A\preccurlyeq B$).

$$ \scalebox{0.7} { \xymatrix{
&\pi_1^{\pi^s(X,x_0)}(X,x_0)&\\
&&\pi_1^{\pi^{sg}(X,x_0)}(X,x_0) \ar[lu]^{(2)}\\
&\pi_1^{wh}(X,x_0) \ar[uu]^{(1)} \ar@{.}[rd]^{(4)}&\\
\pi_1^{qtop}(X,x_0) \ar[ru]^{(5)} \ar[rr]^{(6)}&&\pi_1^{gcov}(X,x_0)=\pi_1^{\pi^{gc}(X,x_0)}(X,x_0) \ar[uu]^{(3)} \ar@{.}[lu] \\
&&&\\
\pi_1^{Tau}(X,x_0) \ar[uu]^{(8)}&&\pi_1^{\widetilde{\pi}^{sp}(X,x_0)}(X,x_0) \ar[uu]^{(7)}\\
&\pi_1^{pSpan}(X,x_0) \ar[lu]^{(9)} \ar[ru]^{(10)}&\\
&&&\\
&\pi_1^{\pi^{sp}(X,x_0)}(X,x_0) \ar[uu]^{(11)}&\\
&&&\\
&\pi_1^{lasso}(X,x_0)=\pi_1^{Span}(X,x_0)\ar[uu]^{(12)}&\pi_1^{\prod^{sp}(X,x_0)}(X,x_0)\ar[luu]^{(15)}\\
&&&\\
&\pi_1^{tSpan}(X,x_0) \ar[uu]^{(13)}\ar[ruu]^{(16)}&\\
&&&\\
&\pi_1^{sh}(X,x_0)\ar[uu]^{(14)}&\\
 }}$$
 
$$ \mathbf{Diagram\ 1}$$

Finally, we summarize the comparison of $\pi_1^{\tau}$-covering maps of a space $X$ for several famous topologies on the fundamental group of 
$\pi_1(X,x_0)$ in the following diagram. Note that $A \longrightarrow B$ means that every covering maps of type $A$ is of type $B$. 
Also, $A$-covering $=$ $B$-covering means that every covering maps of type $A$ is of type $B$ and vice versa.

$$ \scalebox{0.7}{\xymatrix{
\pi_1^{\pi^s(X,x_0)}\text{- covering} &  & \\ 
\pi_1^{\pi^{sg}(X,x_0)}\text{- covering}\ar[u]_{(1)} & & \\
\pi_1^{\pi^{gc}(X,x_0)}\text{-covering} \overset{(4)}{=} \pi_1^{gcov}\text{- covering} \ar[u]^{(3)} \ar@{.}[rr]^{(5)} & 
& \pi_1^{wh}\text{- covering}\ar[uull]^{(2)}\ar@{.}[dd]^{(10)} \\
\pi_1^{\widetilde {\pi}^{sp}(X,x_0)}\text{- covering}\ar[u]^{(6)} & & \\
\pi_1^{pSpan}\text{- covering} \ar[u]^{(11)}\ar[r]_{(12)} &\pi_1^{qtop}\text{- covering} \overset{(13)}{=}  \pi_1^{Tau}\text{- covering} \ar[uul]^{(7)} \ar[uur]^{(9)} 
& \text{generalized covering map} \ar[uull]^{(8)} \\
\pi_1^{\pi^{sp}(X,x_0)}\text{- covering}\ar[u]^{(14)} \ar[r]^{(15)} & \pi_1^{\pi^{fib}(X,x_0)}\text{-covering} & \text{semicovering map}\ar[ull]_{(16)}\ar[u]_{(17)} \\
\pi_1^{lasso}\text{- covering} \overset{(20)}{=} \pi_1^{Span}\text{- covering} \ar[u]^{(18)} & \text{fibration} \ar[u]^{(19)} & \\
\pi_1^{tSpan}\text{- covering} \ar[u]^{(21)} & \text{fibration} + \text{upl} \ar[u]^{(22)} \ar[uuur]^{(23)} \ar@{.}[uur]^{(24)} & \\
\pi_1^{sh}\text{- covering} \ar[u]^{(25)} &  \text{covering map} \ar[uuur]_{(28)} \ar[uul]_{(26)} \ar[u]^{(27)} &\\
 \pi_1^{tSpan}\text{- covering} \ar[r]^{(29)} & \pi_1^{\Pi^{sp}}\text{- covering} \ar[r]^{(30)} & \pi_1^{\pi^{sp}}\text{- covering}
}}$$
$$ \mathbf{Diagram\ 2}$$

In the following, according to the enumeration in the above diagram, we give references and complementary notes for each arrow.
\begin{itemize}
\item[(1)] By (2) in Diagram 1. Clearly, if $\pi^s(X,x_0) = \pi^{sg}(X,x_0)$ then 
$\pi_1^{\pi^s(X,x_0)}\text{- covering map}$ $=$ $\pi_1^{\pi^{sg}(X,x_0)}\text{- covering map}$.
By \cite[example 3.12]{A16} $\pi^s(\mathbb{HA},b) = 1$ but $\pi^{sg}(\mathbb{HA},b) = \pi_1(\mathbb{HA},b)$, 
where $b \neq 0$. Consider $(X,x_0) = (\mathbb{HA},b)$ and $p_e: (\widetilde{X}_e,\widetilde{x}) \to (X,x_0)$, 
then $\mathrm{Im}\pi_1(p_e)$ is open in $\pi_1^{\pi^s(\mathbb{HA},b)}(X,x_0)$ since $\pi^s(\mathbb{HA},b) = 1$. 
Also $p_e$ has path lifting property thus $p_e$ is a $\pi_1^{\pi^s(\mathbb{HA},b)}$-covering. We conjecture that $\mathrm{Im}\pi_1(p_e) \neq \pi^{sg}(\mathbb{HA},b)$,
if so, then $p_e$ is not a $\pi_1^{\pi^{sg}(\mathbb{HA},b)}$-covering because $\pi_1^{\pi^{sg}(\mathbb{HA},b)}(X,x_0)$ is indiscrete 
(we are interested in fining out about the conjecture).
\item[(2)] By (1) in Diagram 1. The equality holds if $X$ is semilocally $\pi^s(X,x_0)$-connected at $x_0$ because $\pi_1^{wh}(X,x_0) = \pi_1^{\pi^s(X,x_0)}(X,x_0)$ 
(see \cite[Proposition 4.21]{Br12}). Consider $X = \mathbb{HE}$, then $\pi^s(\mathbb{HE},0) = 1$ and so $p_e$ is a $\pi_1^{\pi^s}$-covering map. We guess that 
$Im \pi_1(p_e)$ is not open in $\pi_1^{wh}(\mathbb{HE},0)$, if so, then $p_e$ is not a $\pi_1^{wh}$-covering map. 
\item[(3)] By (3) in Diagram 1. The equality holds if $\pi^{sg}(X,x_0) = \pi^{gc}(X,x_0)$ because $\pi_1^{gcov}(X,x_0) = \pi_1^{\pi^{sg}(X,x_0)}(X,x_0)$. 
Consider $X =RX$ in \cite[Definition 7]{FZ13}, then $\pi^{sg}(X,x_0) = 1$ but $\pi^{gc}(X,x_0) \neq 1$. Thus $p_e$ is a $\pi_1^{\pi^{sg}}$-covering map. 
We guess that $\mathrm{Im} \pi_1(p_e)$ is not open in $\pi_1^{\pi^{gc}(X,x_0)}(X,x_0)$, if so, then $p_e$ is not a $\pi_1^{\pi^{gc}}$-covering map.
\item[(4)] Since $\pi_1^{gcov}(X,x_0) = \pi_1^{\pi^{gc}(X,x_0)}(X,x_0)$ (see \cite[Theorem 3.3]{SM}).
\item[(5)] It is known that $\pi_1^{wh}(X,x_0)$ and $\pi_1^{gcov}(X,x_0)$ are not comparable in general (see \cite[Example 3.25 and Example 3.26]{A16}) 
and so $\pi_1^{wh}$-covering and $\pi_1^{gcov}$-covering are not comparable in general. If $X$ is connected, locally path connected and semilocally 
$\pi^{gc}(X,x_0)$-connected, then $\pi_1^{gcov}(X,x_0) \preccurlyeq \pi_1^{wh}(X,x_0)$ (see \cite[Corollary 3.4.2]{SM}) and so every $\pi_1^{gcov}\text{-covering}$ 
is a $\pi_1^{wh}\text{-covering}$. The equality holds for any semilocally simply connected space because $\pi_1^{gcov}(X,x_0) = \pi_1^{wh}(X,x_0)$.
\item[(6)] By (7) in Diagram 1. The equality holds if $\widetilde{\pi}^{sp}(X,x_0) = \pi^{gc}(X,x_0)$.
\item[(7)] By (6) in Diagram 1. The equality holds if $X$ is a connected, locally path connected and semilocally  $\pi^{gc}(X,x_0)$-connected space because 
$\pi_1^{qtop}(X,x_0) = \pi_1^{gcov}(X,x_0)$ (see \cite[Corollary 3.4.2]{SM}). The universal path space of $\mathbb{HE}$ is generalized covering space and so $p_e$ 
is a generalized covering map but it is not $\pi_1^{qtop}$-covering map because $\mathrm{Im}\pi_1(p_e) = 1$ and it is not open in $\pi_1^{qtop}(\mathbb{HE},0)$ 
since $\pi_1^{qtop}(\mathbb{HE},0)$ is not discrete (see \cite[Example 3.25]{A16}).
\item[(8)] By Theorem \ref{3.7}.
\item[(9)] By (5) in Diagram 1. The equality holds if $X$ is locally path connected and SLT at $x_0$ because $\pi_1^{qtop}(X,x_0) = \pi_1^{wh}(X,x_0)$ 
(see \cite[Corollary 3.3]{ PA17}).
\item[(10)] By Theorem \ref{3.8}.
\item[(11)] By (10) in Diagram 1. The equality holds if $\widetilde{\pi}^{sp}(X,x_0)$ is a semicovering subgroup.
\item[(12)] By (9) in Diagram 1. The equality holds if X is locally path connected and semilocally small generated.
\item[(13)] By \cite[Proposition 3.16]{B13} $\pi_1^{qtop}(X,x_0)$ and $\pi_1^{Tau}(X,x_0)$ have the same open subgroups thus every 
$\pi_1^{qtop}\text{-covering}$ is a $\pi_1^{Tau}\text{-covering}$ and vice versa.
\item[(14)] It is proved in \cite[Theorem 3.2]{SM} $\pi_1^{\pi^{sp}(X,x_0)}(X,x_0) \preccurlyeq \pi_1^{pSpan}(X,x_0)$ if and only if $\pi^{sp}(X,x_0)$ 
is a semicovering subgroup. Therefore, if $\pi^{sp}(X,x_0)$ is a semicovering subgroup, then every $\pi_1^{\pi^{sp}(X,x_0)}$- covering map is a 
$\pi_1^{pSpan}$-covering map. The equality holds if $\pi^{sp}(X,x_0) = \widetilde{\pi}^{sp}(X,x_0)$.
\item[(15)] Since every covering map is a fibration, $\pi^{fib}(X,x_0) \subseteq \pi^{sp}(X,x_0)$. The equality holds if $\pi^{fib}(X,x_0) = \pi^{sp}(X,x_0)$.
\item[(16)] By Theorem \ref{3.7}.
\item[(17)] It is known that every semicovering map is a generalized covering map. For the converse, It is proved in \cite[Prpposition 2.10]{A16} that if $X$ 
is connected and locally path connected, then a generalized covering map $p: (\widetilde{X},\widetilde{x}_0) \to (X,x_0)$ is a semicovering map if and only if $p$ 
is locally injective . Note that there exists an example of a generalized covering map which is not semicovering map (see \cite[Example 4.15]{FZ7}.
\item[(18)] By (12) in Diagram 1. The equality holds if $\pi^{sp}(X,x_0)$  is a covering subgroup. Put $H = \pi^{sp}(\mathbb{HE},0)$, then $p_H$ is a 
$\pi_1^{\pi^{sp}}$-covering map but it is not a $\pi_1^{Span}$-covering map because if $p_H$ is a $\pi_1^{Span}$-covering map, 
then it is universal covering map but we know that $\mathbb{HE}$ does not have universal covering map (see \cite[Corollary 3.7]{PTM17}).
\item[(19)] By definition of $\pi^{fib}(X,x_0)$.
\item[(20)] Since $\pi_1^{lasso}(X,x_0) = \pi_1^{Span}(X,x_0)$ (see \cite[Proposition 3.5]{A16}).
\item[(21)] By (13) in Diagram 1. The equality holds if $X$ is locally path connected, paracompact and Hausdorff (see \cite[Theorem 7.6]{BF14}).
\item[(22)] It is clear.
\item[(23)] By Theorem 2.4.5 in \cite{Span}.
\item[(24)] If a fibration with upl is a local homeomorphism, then by \cite[Theorem 2.4.5]{Span} it is a semicovering map.
\item[(25)] By (14) in Diagram 1. The equality holds if $X$ is locally path connected, paracompact and Hausdorff (see \cite[Theorem 7.6]{BF14}).
\item[(26)] By Theorem \ref{3.7}.
\item[(27)] By \cite[Theorem 2.2.3]{Span}. The equality holds if there is an open covering $\mathcal{U}$ of $X$ such that 
$\pi(\mathcal{U},x_0) \subseteq \pi_1(p)(\widetilde{X},\widetilde{x}_0)$ for a fibration $p : (\widetilde{X},\widetilde{x}_0) \to (X,x_0)$ 
with upl \cite[Theorem 2.5.13]{Span}. Note that there is an example of a fibraion with upl which is not a covering map in \cite[Example 2.2.9]{Span}.
\item[(28)] It is known that every covering map is a semicovering map. By \cite[Corollary 4.7]{A16} for a connected and locally path connected space 
$X$, GCOV$(X)=$COV($X$) if and only if  $X$ is semilocally $\pi_1^{gc}(X,x_0)$-connected. Brazas gives an example of a semicovering map which is not covering map
 (see \cite[Example 3.8]{B12}).
\item[(29)] By (16) in Diagram 1.
\item[(30)] By (15) in Diagram 1. The equality holds if $X$ is $T_1$ and paracompact space (see \cite[Theorem 3.13]{BF14}).
\end{itemize}

In order to investigate further the above diagram, we raise some questions in the following which we are interested in finding answers to them.

\begin{itemize}
\item[(Q1)] Is there a $\pi_1^{\pi^{gc}(X,x_0)}$- covering map which is not a $\pi_1^{\widetilde{\pi}^{sp}(X,x_0)}$-covering map?
\item[(Q2)] Under which condition the equality of $\pi_1^{\pi^{gc}(X,x_0)}$- covering maps and generalized covering maps holds?  
Is there a $\pi_1^{\pi^{gc}(X,x_0)}$- covering map which is not generalized covering map?
\item[(Q3)] Is there a $\pi_1^{wh}$- covering map which is not a $\pi_1^{qtop}$- covering map?
\item[(Q4)] Under which condition the equality of $\pi_1^{wh}$- covering maps and generalized covering maps holds? Is there a $\pi_1^{wh}$- covering map 
which is not a generalized covering map?
\item[(Q5)] Is there a $\pi_1^{\widetilde{\pi}^{sp}(X,x_0)}$- covering map which is not a $\pi_1^{pSpan}$- covering map?
\item[(Q6)] Is there a $\pi_1^{qtop}$- covering map which is not a $\pi_1^{pSpan}$- covering map?
\item[(Q7)] Is there a $\pi_1^{pSpan}$- covering map which is not a $\pi_1^{\pi^{sp}(X,x_0)}$- covering map?
\item[(Q8)] Is there a $\pi_1^{\pi^{fib}(X,x_0)}$- covering map which is not a $\pi_1^{\pi^{sp}(X,x_0)}$- covering map?
\item[(Q9)] Is there a $\pi_1^{pSpan}$- covering map which is not a semicovering map?
\item[(Q10)] Under which condition the equality of $\pi_1^{\pi^{fib}(X,x_0)}$- covering maps and fibrations holds? Is there a 
$\pi_1^{\pi^{fib}(X,x_0)}$-covering map which is not a fibration?
\item[(Q11)] Is there a $\pi_1^{Span}$- covering map which is not a $\pi_1^{tSpan}$-covering map?
\item[(Q12)] Under which condition the equality of generalized covering maps and fibrations with upl holds?  
Is there a generalized covering map which is not a fibration with upl?
\item[(Q13)] Is there a condition for the equality of semicovering maps and fibrations with upl over $X$?  Is there a semicovering map which is not a fibration with upl?
\item[(Q14)] Is there a $\pi_1^{tSpan}$- covering map which is not a $\pi_1^{sh}$- covering map?
\item[(Q15)] Is there a $\pi_1^{Span}$- covering map which is not a covering map?
\item[(Q16)] Under which condition the equality of $\pi_1^{\Pi^{sp}(X,x_0)}$- covering maps and $\pi_1^{tSpan}$- covering maps holds?  
Is there a $\pi_1^{\Pi^{sp}(X,x_0)}$- covering map which is not $\pi_1^{tSpan}$-covering map?
\item[(Q17)] Is there a $\pi_1^{\pi^{sp}(X,x_0)}$- covering map which is not $\pi_1^{\Pi^{sp}(X,x_0)}$-covering map?
\end{itemize}


 \end{document}